\theoremstyle{plain}
\newtheorem*{Main Theorem*}{Main Theorem}
\newtheorem{thm}{Theorem}
\numberwithin{thm}{section}
\newtheorem{prop}[thm]{Proposition}
\newtheorem{lem}[thm]{Lemma}
\newtheorem{defn}[thm]{Definition}
\newtheorem{rem}[thm]{Remark}
\newtheorem*{Corollary of the Main Theorem*}{Corollary of the Main Theorem}
\theoremstyle{definition}
\renewcommand\Im{\operatorname{Im}}
\renewcommand\Re{\operatorname{Re}}
\begin{document}

\title{A converse landing theorem in parameter spaces}

\author{Asl{\i} Deniz}
\affil{Department of Mathematics and Statistics \\College of Business Administration \\American University of the Middle East, Kuwait\\
{seaslideniz@gmail.com}}

\maketitle

\begin{abstract}
In this article, we prove that for several one-dimensional holomorphic families of holomorphic maps, in the parameter plane, there exists a local piece of a curve that lands at a given parabolic parameter, in the spirit of well-known results about the quadratic and the exponential families. We also show that, under some assumptions, this general result partially answers the existence and landing questions of ray structures in the parameter planes for holomorphic families of transcendental entire maps.
\end{abstract}

\begin{itemize}
\item Keywords: holomorphic dynamics, parameter rays, landing theorem
\item 2010 Mathematics Subject Classification: 37F45, 30D05, 30D20
\end{itemize}
\section{Introduction}

To investigate dynamical and parameter planes for holomorphic maps, one of the main strategies is to study dynamic and parameter rays, respectively, and their landing behaviours.
The concept of the ray is first developed in \cite{DH} in the study of the quadratic family $Q_c(z)=z^2+c$: the Julia set $J_c$ is the boundary of the filled-in Julia set $K_c$ -- the points with bounded orbits. The orbit of the critical point $z=0$ has an important role in the topology of $J_c$ and $K_c$: both $K_c$ and $J_c$ are connected if and only if the critical point  has a bounded orbit. When $K_c$ is connected, using the B\"{o}ttcher Coordinate $\psi_c:\mathbb{C}\backslash K_c\rightarrow\mathbb{C}\backslash \overline{\mathbb{D}}$ where $\mathbb{D}$ is the unit disk, the external ray of argument $\theta$ is defined as the inverse image $\psi_c^{-1}\{re^{2\pi i\theta}, r>1\}$. The periodicity of an external ray is then explained by the periodicity of its argument. An external ray of argument $\theta$ is said to land if $\lim_{r\rightarrow 1}\psi_c^{-1}\{re^{2\pi i\theta}, r>1\}$ exists. Carath\'eodory-Torhosrt Theorem says that all external rays land and the landing point depends continuously on the argument if and only if $K_c$ is locally connected (see \cite[Theorem 18.3]{Mil}). Douady and Hubbard presented their famous result in landing questions, regardless of local connectivity: If $K_c$ is connected, all periodic external rays land at a repelling or parabolic periodic point, and conversely, every repelling and parabolic periodic point is the landing point of periodic external rays \cite{DH}.

This work has been a great motivation for studying ray structures and their landing properties for transcendental maps. Such investigations began with the exponential family $E_{\kappa}(z)=e^{z}+\kappa$, the analogue of the quadratic family in terms of having only one singular value (but this time an asymptotic value). Similar to the external rays for the quadratic maps, it has been observed that the path-connected components of the escaping set form curves which are called dynamic rays (hairs). The dynamic ray structure for exponential maps has been studied more than a hundred years back in \cite{F} and later in \cite{BDGH,DK,DT,SZ1,SZ2}. Later on, the existence of dynamic rays for many more complicated transcendental maps has been proven in \cite{B2007,BaXaRe,RRRS, NuriaDavid}. There are many landing results for many types of transcendental entire maps, such as \cite{AnnaNuria,LyubichBenini,RemBen,Deniz,Helena,RemSiegel,Rem2006}. Of such works, \cite{RemBen} is the most significant in the sense that it gives a perfect correspondence to Douady and Hubbard's landing theorem: If a transcendental entire map with dynamic rays has bounded postsingular set, then all dynamic rays land, and conversely, all parabolic and repelling periodic points are landing points of some periodic rays.

Similar to the analysis in dynamical planes, the study of one-dimensional parameter spaces can be considered in terms of the study of curves consisting of escaping parameters and in terms of their landing behaviours. These curves are called parameter rays. By an escaping parameter, we mean a parameter for which one of the singular values escapes to infinity along dynamic rays with a prescribed identification. According to the leading work by Douady and Hubbard in the quadratic family, the Riemann map $\Psi:\mathbb{C}\backslash \mathcal{M}\rightarrow\mathbb{C}\backslash\overline{\mathbb{D}}$ is constructed such that $\Psi(c)=\psi_c(c)$, where $\psi_c$ is the corresponding B\"{o}ttcher coordinate and $\mathcal{M}$ is the Mandelbrot set. Hence, a parameter ray of argument $\theta$ is defined as $\Psi_c^{-1}\{re^{2\pi i\theta}, r>1\}$. With this construction, for a parameter on a ray of some argument, in the corresponding dynamical plane, the critical value is on the dynamic ray of the same argument. A parameter ray of argument $\theta$ is said to land if $\lim_{r\rightarrow 1}\Psi_c^{-1}\{re^{2\pi i\theta}, r>1\}$ exists. The landing results in \cite{DH} for parameter rays of periodic arguments also have two directions: every parameter ray of a periodic argument lands at a parabolic parameter $c_0$. Moreover, in the dynamical plane of $c_0$, the dynamic ray of argument $\theta$ lands at a parabolic periodic point. Conversely, every parabolic parameter is the landing point of parameter rays of periodic arguments. The same result is established in \cite{SchMandelbrotset} using combinatorial arguments. As an important result in the same spirit, a landing theorem for parameter rays of rational arguments is presented in \cite{CarstenRyd} of the connectedness loci for the one parameter families of polynomials $P_c(z)=z^{d}+c$ where $d\geq 2$. 
Although computer simulations suggest that escaping parameters form curves in parameter spaces for one-dimensional families of transcendental maps, there is no complete answer to the question of existence of parameter rays, except for the exponential family. The first study of the existence of parameter rays in the exponential family was carried out by Devaney and coauthors in \cite{BDGH}. Schleicher worked broadly on combinatorial analysis of the exponential family in his habilitation thesis \cite{sch1999}, and the construction of the parameter rays was completed in \cite{for2003, forsch2009, RemSch2}. In \cite{sch1999}, the landing properties of the parameter rays are also discussed. More precisely, it is shown that every parabolic parameter is the landing point of one or two parameter rays.
 
 Moreover, based on this result, it is shown that every parameter ray at periodic address lands, which then appeared also in \cite{RemLasBifurcation}. A very recent study by Astorg and coauthors \cite{AsBeFa} in natural families of rational, entire or meromorphic functions of finite type has some partial results on this problem: Roughly they show that under some mild conditions, a singular parameter is the endpoint of a curve of parameters for which an attracting cycle exits the domain, while the multiplier converges to zero.

  Here, we would like to distinguish two main questions in the study of rays in both dynamical and parameter planes: whether a ray lands or there exists a ray landing at a considered point. We use terms ``landing" and ``converse landing" for these two questions, respectively. In this paper, we prove a converse landing theorem in some one-dimensional holomorphic families of holomorphic maps. In the Main Theorem we show that, under some assumptions, for a parabolic parameter $a_0$, if a forward invariant curve lands at at parabolic fixed point in the dynamical plane, then in the parameter plane there exists a curve piece  which lands at $a_0$, and which consists of parameters for which a distinguished singular value is on a forward invariant curve. Our work not only answers a landing question but also answers (partially) the existence question. Since we use a local theory, we prove the existence of only a piece of curve in parameter plane, not a full length. On purpose, we do not mention rays, so that our result can apply to many cases as long as local conditions hold regardless of whether dynamic and parameter rays are constructed, or proven to exist. To the best of our knowledge regarding the existence of parameter rays and their landing behaviours, nothing has been proven for families of transcendental entire maps in a general setting. Our result serves also a partial answers to these problems, which is separately given by Corollary of the Main Theorem.

\subsection{Theorems, basic definitions and notations}

The dynamical plane for a holomorphic map is partitioned into two totally invariant sets with respect to the behaviour of the points: the set of points with stable behaviour and its complement, which are called the \textit{Fatou set} and the \textit{Julia set}, respectively.
If the orbit of a point $z_0$ under a holomorphic map $f$ is finite, we say that $z_0$ is pre-periodic. If there exists $q\in\mathbb{N}$ satisfying $f^q(z_0)=z_0$, then $z_0$ is periodic; in particular, $z_0$ is a fixed point if $q=1$. Periodic points are classified in terms of their multipliers $\mu(z_0):=(f^q)'(z_0)$:
If $|\mu(z_0)|<1$, $z_0$ is an attracting, if $|\mu(z_0)|>1$, $z_0$ is a repelling, and if $|\mu(z_0)|=1$, $z_0$ is an indifferent periodic point. If $\mu(z_0)=e^{2\pi i \theta}$, where $\theta\in\mathbb{Q}$, then $z_0$ is called a parabolic periodic point.
Singular values are points that have no neighbourhood where all inverse branches of $f$ are well defined and univalent. 
A critical value is the image of a critical point. An asymptotic value is a point $a\in\mathbb{C}$ for which there exists a curve tending to infinity while its image under $f$ converges at $a$. The singular set of a function $f$ is the union of critical and asymptotic values and their accumulation points.

\begin{defn}[Holomorphic family]
Let $\mathcal{A}$ be a complex manifold. A map 
\begin{eqnarray*}
\mathcal{F}:\mathcal{A}\times\mathbb{C}&\rightarrow&\widehat{\mathbb{C}}\\
(\textbf{a}, z)&\mapsto& f_{\textbf{a}}(z)
\end{eqnarray*}
is called a holomorphic family of holomorphic maps over the parameter space $\mathcal{A}$ if

\begin{itemize}
    \item $\textbf{a}\mapsto f_{\textbf{a}}(z)$ is holomorphic for all $z\in\mathbb{C}$,
    \item $z\mapsto f_{\textbf{a}}(z)$ is holomorphic for all $\textbf{a}\in\mathcal{A}$.
\end{itemize}

\end{defn}
The dimension of $\mathcal{A}$ can be finite, or infinite, related to the singular set of each $f_{\textbf{a}}$. For example, it is known that every entire map with finite number of singular values belongs to a finite dimensional complex manifold 
 \cite{EL}. A classical approach to study holomorphic families is to fix all but one of the isolated singular values, and obtain a "one dimensional slice" of the parameter space.
 For a study of specific slices of maps with finite singular set, see for example, \cite{NuriaLinda}. Our setting is rather general, which allows the members of the holomorphic family to have infinitely many singular values, or a dense singular set as well. Also, more than one singular values can bifurcate at the same time. We are interested in bifurcations of only one distinguished isolated singular value, as the only singular value inside some certain Euclidean disk in the corresponding dynamical planes. And other singular values are allowed to vary as long as they are outside that given Euclidean disk. 

\begin{defn}[local natural family with parabolic assumptions]

 Let $\mathcal{F}$ be a holomorphic family of holomorphic maps over $\mathcal{A}$, let $\mathbb{D}(a_0, r)\subset\mathcal{A}$ be a Euclidean disk. We call the restriction $\{f_a\}_{a\in\mathbb{D}(a_0, r)}$ a local natural family with parabolic assumptions in $\mathcal{F}$ if the following conditions hold:

\begin{enumerate}
    
    \item [1.] $a_0$ is a parabolic parameter, and $f_{a_0}$ has a nonpersistent parabolic fixed point $z_0$ satisfying $f_{a_0}'(z_0)=1$ with $f_{a_0}''(z_0)\neq 0$.
    
    \item [2.] There exists $R>0$ such that there exists only one singular value $s(a)$ of $f_a$  in $\overline{\mathbb{D}(z_0,R)}$ for all $a\in\mathbb{D}(a_0,r)$. Moreover
    \begin{itemize}
        \item[i.] $a\mapsto s(a)$ is holomorphic in $\mathbb{D}(a_0,r)$,
        \item[ii.] There exists an attracting petal attached to $z_0$ where $s(a_0)$ is the only singular value.
   \end{itemize}
  
\end{enumerate}

\end{defn}

\begin{defn}[Equivariant holomorphic motion]
A holomorphic motion of $E$ over a domain $M\in\mathbb{C}$ with base point $a_0\in M$ is a map $H:M\times E\rightarrow\widehat{\mathbb{C}}$ which satisfies the following properties:
\begin{itemize}
    \item [i.] $H(a_0, z)=z$,
    \item [ii.] $z\mapsto H(\lambda, z)$ is injective for all $a\in M$,
    \item [iii.] $a\mapsto H(a, z)$ is holomorphic for all $z\in E$.
\end{itemize}
Given $\{f_{a}\}_{a\in M}$, we say that the holomorphic motion $H:M\times E\rightarrow\mathbb{C}$ is equivariant if and only if
\begin{equation*}
f_a(H(a,z))=H(a, f_{a_0}(z))
\end{equation*}
whenever $z$ and $f_{a_0}(z)$ belong to $E$, and for all $a\in M$.

\end{defn}

A curve $\gamma:(-\infty,\mathcal{T})\rightarrow\mathbb{C}$, $\mathcal{T}\in\mathbb{R}\cup\{\infty\}$, $t\mapsto\gamma(t)$ is said to be \textit{a forward invariant curve} for the map $f$, if $f(\gamma)\subset \gamma$ and we parametrize it such that for $t<\mathcal{T}-1$, $f(\gamma(t))=\gamma(t+1)$. We say that $\gamma$ lands if $\lim_{t\rightarrow-\infty}\gamma(t)$ exists. For ease of the expression, we denote by $\gamma$, the whole curve. We use the notation $\gamma_a$ for a forward invariant curve in the dynamical plane generated by the iterates of the map $f_a$. 

\begin{Main Theorem*}[A converse landing theorem for a local natural family with parabolic assumptions]\label{main2}
Consider a local natural family with parabolic assumptions $\{f_a\}_{a\in\mathbb{D}(a_0, r)}$. Suppose that
 there is a forward invariant curve $\gamma_{a_0}:(-\infty,\mathcal{T})\rightarrow\mathbb{C}$, $t\mapsto\gamma_{a_0}(t)$ with the following properties:

\begin{enumerate}

    \item [i.] $\lim_{t\rightarrow-\infty}\gamma_{a_0}(t)=z_0$.
    \item [ii.] $\gamma_{a_0}$ does not contain any critical point or a singular value.
    \item [iii.] There exists $T>0$, a neighbourhood $\mathbb{D}(a_0,\delta)$ of $a_0$, such that $\delta=\delta(T)$, $\delta<r$, and there exists an equivariant holomorphic motion
\begin{eqnarray*}
H:\mathbb{D}(a_0,\delta)\times\gamma_{a_0}[T,\mathcal{T})&\rightarrow&\mathbb{C}\notag\\
(a,\gamma_{a_0}(t))&\mapsto& \gamma_a(t).
\end{eqnarray*}

\end{enumerate}

Then, for some $t_0\in\mathbb{R}$ with $t_0+1<T$, there exists a simple curve  $\Gamma:(-\infty,t_0]\rightarrow\mathbb{D}(a_0,\delta)$ in the parameter plane, such that for each $a=\Gamma(t)$: $s(a)=\gamma_a(t)$. Moreover, $\lim_{t\rightarrow-\infty}\Gamma(t)=a_0$.

\end{Main Theorem*}

In order to exhibit most generality, we present our converse landing theorem without mentioning dynamic rays and parameter rays explicitly. Wherever dynamic rays are constructed, or proven to exist, then "forward invariant curve" is just replaced by the fixed dynamic ray, and curve piece in parameter plane then corresponds to a parameter ray piece, in particular, a "fixed" parameter ray piece. Although there is no dynamics in parameter spaces so being "fixed" has no sense, we use this term to associate a parameter ray with some certain fixed dynamic ray. In fact, our result covers beyond the dynamic ray in the sense of curves in the escaping set, and in relation, the parameter ray in the sense of curves consisting of escaping parameters. For example, as demonstrated in \cite{DenizCon}, dynamic rays can be constructed inside a super-attracting basin (and in this case they are called internal dynamic rays), and parameter rays can be constructed inside a hyperbolic component (and in this case they are called internal parameter rays).  We keep the statement of our theorem so general so that it can also deal with these and many other cases.
\\
\\
Our proof relies on having an equivariant holomorphic motion of the forward invariant curve in consideration for large potentials. 
For polynomials of degree $d\geq 2$, existence of such holomorphic motions are guaranteed by the B\"{o}ttcher's Theorem, so our theorem applies to all polynomials of degree $d\geq 2$. 
\\
\\
Because the landing and the existence of parameter rays for families of transcendental entire maps have been interesting research topics, we present here a partial answer to this question, as an application of our theorem. In families of transcendental entire maps, holomorphic motion is related to  \textit{quasiconformal equivalence}.  Recall that class $\mathcal{B}$, also called the Eremenko-Lyubich class consists of all transcendental entire maps with bounded singular sets. In \cite{remrig}, it is proven that quasiconformally equivalent maps  in class $\mathcal{B}$ are quasiconformally conjugate in a subset of the Julia set. 
This establishes the existence of the required holomorphic motion for maps in class $\mathcal{B}$. 
Here we include the related result.
\begin{prop}{\cite[Proposition 3.6]{remrig}}
Let $f\in\mathcal{B}$, $M$ be a finite-dimensional complex manifold, with base point $a_0\in M$. Suppose that $\{f_{a}\}_{a\in M}$ is a family of transcendental entire maps which are quasiconformally equivalent to $f_{a_0}$ with the equivalences given by $\psi_{a}\circ f_a=f_{a_0}\circ\phi_{a}$, where $\psi_{a_0}=\phi_{a_0}=Id$, and $\psi_{a}$ and  $\phi_{a}$ depends on the parameter $a$ holomorphically.
Let $N$ be a  compact subset of $M$, with $a_0\in N$. Then there exists a constant $R>0$ and a set
\begin{equation*}
    \mathcal{J}_R:=\{z\in\mathbb{C}:\;\;|f_{a_0}^n(z)|>R \;\;\text{for all}\;\; n\geq 1\},
\end{equation*}
such that there exists an equivariant holomorphic motion of $J_R$ over the interior of $N$.
\end{prop}
 
Here we include a definition of dynamic ray, from \cite{AnnaNuria}.
 \begin{defn}[Dynamic ray]
A dynamic ray for a transcendental entire map $f$ is an injective curve $\gamma:(-\infty,\infty)\rightarrow \mathcal{I}(f)$, as a maximal set in the escaping set satisfying the following properties:
\begin{itemize}
    \item[i.] $\lim_{t\rightarrow\infty}|f^n(\gamma(t))|=\infty$, for all $n\geq 0$,
    \item[ii.] $\lim_{n\rightarrow\infty}|f^n(\gamma(t))|=\infty$, uniformly on $[t_0,\infty)$ for all $t_0\geq, -\infty$
    \item[iii.] $\gamma(t)$ is not a critical point for any $t\in\mathbb{R}$.
\end{itemize}

\end{defn}
  We denote by $\widehat{\mathcal{B}}$ the class of transcendental entire maps in class $\mathcal{B}$, of finite order, or finite composition of such maps. For this class, the existence of a dynamic ray structure is proven by Rottenfusser et al. \cite{RRRS}. A dynamic ray $\gamma$ is called \textit{a fixed dynamic ray}  if it is forward invariant.

\begin{defn}[Fixed parameter ray]

A fixed parameter ray associated with the singular value $s(a)$ and a fixed dynamic ray $\gamma_a$ for the local natural family with parabolic assumptions $\{f_a\}_{a\in\mathbb{D}(a_0, r)}\subset\widehat{\mathcal{B}}$ is an injective curve $\Gamma:(-\infty,t_0]\rightarrow\mathbb{D}(a_0, r)$, $t\mapsto \Gamma(t)$ for some $t_0<\infty$ with the property that for $\Gamma(t)=a$, then $s(a)=\gamma_a(t)$.
\end{defn}

\begin{Corollary of the Main Theorem*}[A converse landing theorem for some families of transcendental entire maps]
Consider a local natural family with parabolic assumptions $\{f_a\}_{a\in\mathbb{D}(a_0, r)}$ in $\widehat{\mathcal{B}}$,  of quasiconformally equivalent maps. Suppose that there exists a fixed dynamic ray $\gamma_{a_0}$, which lands at $z_0$ and which does not contain a singular value. Then, for some $t_0\in\mathbb{R}$, there exists a fixed parameter ray piece $\Gamma:(-\infty,t_0]\rightarrow\mathbb{D}(a_0,\delta)$, $\delta<r$. Moreover, $\Gamma(t)$ lands at $a_0$.

\end{Corollary of the Main Theorem*}

In several cases, it has been proven already that under some certain conditions, given parabolic fixed point is the landing point of at least one fixed ray (see  \cite{AnnaNuria, RemBen, Helena}). Our theorem easily can be adapted to those with a little modification.

\subsubsection*{Structure of the paper}
Section~\ref{sectionfatouimplosion} recalls the Fatou coordinates and the parabolic implosion phenomenon. The proof of the Main Theorem requires assigning a new parameter. This is given in Section~\ref{mu}. In Section~\ref{sectionproof}, we prove the Main Theorem by using the new parameter.

\section{Fatou coordinates and parabolic implosion}\label{sectionfatouimplosion}

Here, we summarize the concept of Fatou coordinates and the phenomenon of the parabolic implosion, mostly following \cite{Mil, shis2000}. 

Consider the general form of maps with parabolic fixed point at $0$ with the multiplier $1$:
\begin{equation}\label{generalparabolic}
    f(z)=z+az^{n+1}+...\;\;\;\;\;\;\;\;\;\; n\geq 1 \;\;\text{and}\;\; a\neq 0.
\end{equation}
The number $n+1$ is called the multiplicity of the fixed point $0$. We are interested in the case where $n+1\geq 2$.
\begin{defn}[Attraction and repulsion vectors]
A complex number $\textbf{v}$ is called a repulsion vector for $f$ at the origin if $na\textbf{v}^n=1$, and an attraction vector if $na\textbf{v}^n=-1$.
\end{defn}

Now consider the parabolic fixed point $z_0$ for $f_{a_0}$. Since $f_{a_0}'(z_0)=1$ and $f_{a_0}''(z_0)\neq 0$,  $f_{a_0}$ can be written as

\begin{equation*}
    f_{a_0}(z)=z+\frac{f_{a_0}''(z_0)}{2}(z-z_0)^2+...
\end{equation*}
By a change of coordinate we can assume that the fixed point is $0$, and by another change of coordinate we can assume that $f''_{a_0}(0)=2$, that is,

\begin{equation}
    f_{a_0}(z)=z+z^2+...
\end{equation}
Comparing with the general form (\ref{generalparabolic}), here $n=1$ and $a=1$, so the only repulsion vector is  $<1,0>$ and the only attraction vector is $<-1,0>$ for $f_{a_0}$ at the origin.

\begin{defn}[Attracting and repelling petals]
Let $U$ be a simply connected neighborhood of $z_0$ such that $f_{a_0}:U\rightarrow f_{a_0}(U)$ is univalent. An attracting petal $\Omega^{+}\subset U$ for $f_{a_0}$ for the attraction vector is an open set such that
\begin{enumerate}
    \item [i.] $z_0$ is on the boundary of $\Omega^{+}$,
    \item [ii.] $f_{a_0}(\Omega^{+})\subset \Omega^{+}$,
    \item [iii.] an orbit under $f_{a_0}$ is eventually absorbed by $\Omega^{+}$ if and only if it converges to $z_0$ from the direction of the attraction vector.
\end{enumerate}
Let $g_{a_0}:f_{a_0}(U)\rightarrow U$ be the inverse branch of $f_{a_0}$ which fixes $z_0$. A repelling petal $\Omega^{-}\subset U$ for the repulsion vector is then defined as an attracting petal for $g_{a_0}$.
\end{defn}

Petals and their definitions are not unique. For more details see, for example, \cite{Mil}, \cite{shis2000}.
\begin{thm}[Existence of Fatou coordinates]
There exist a pair of univalent maps, $\phi_{a_0}^{+}:\Omega^{+}\rightarrow \mathbb{C}$ and $\phi_{a_0}^{-}:\Omega^{-}\rightarrow\mathbb{C}$, satisfying
\begin{equation*}
\phi_{a_0}^{+}(f_{a_0}(z))=\phi_{a_0}^{+}(z)+1\;\;\mathrm{and}\;\;\phi_{a_0}^{-}(f_{a_0}(z))=\phi_{a_0}^{-}(z)+1.
\end{equation*}
\end{thm}

The maps $\phi_{a_0}^{+}$ and $\phi_{a_0}^{-}$ are called
\textit{the incoming Fatou coordinate} and \textit{the outgoing Fatou coordinate}, respectively. Let us define $\psi_{a_0}^{+}:=(\phi_{a_0}^{+})^{-1}$, and $\psi_{a_0}^{-}:=(\phi_{a_0}^-)^{-1}$. By using the dynamics, $\phi_{a_0}^{+}$ extends to the whole parabolic basin, and  $\psi_{a_0}^{-}$ extends to the whole complex plane. However, these extensions are no longer univalent because of the presence of singular value(s).

In the dynamical plane for $f_{a_0}$, there are crescent-shaped fundamental domains, each of which is bounded by a pair of curves meeting at $z_0$, where one curve maps to the other under $f_{a_0}$. We are interested in perturbations of the map $f_{a_0}$ to a map $f_a$ nearby. If the parabolic fixed point is nonpersistent, it bifurcates into two fixed points, each of which has a multiplier close to $1$. These fixed points can be attracting, repelling, or indifferent. When the multipliers of the fixed points are not real, the fundamental domains continue to exist. In such cases, the boundary curves of each fundamental domain meet at the two distinct fixed points. A gate opens between these fixed points, and any point goes through the gate under the iteration. This is referred to ``egg beater" dynamics (see Figure~\ref{eggbeatereps}). In this case, conjugation between the dynamics and the translation $T_1:z\mapsto z+1$ still exists, where the conjugating maps are known as the Douady-Fatou coordinates. More presecisely, adapting \cite[Proposition 3.2.2]{shis2000}, \cite[Theorem 2.1]{lei2000} to our setting we give the following.

\begin{figure}[htb!]
\begin{center}
\def\svgwidth{6cm}
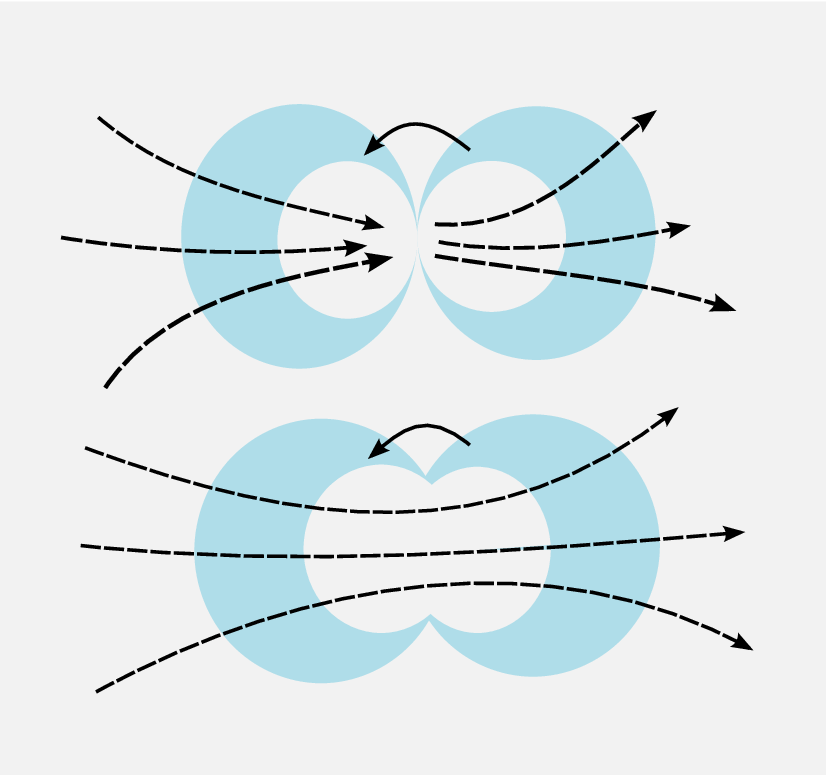
\caption{\small{Bifurcation of the parabolic fixed point. }}\label{eggbeatereps}
\end{center}
\end{figure}

\begin{thm}[Existence of Douady-Fatou coordinates ]\label{thmdouadyfatoucoordinate}
 Consider the local natural family with parabolic assumptions $\{f_a\}_{a\in\mathbb{D}(a_0,r)}$. There exists an open sector $\Delta\subset\mathbb{D}(a_0,r)$ with corner point $a_0$, and there exists a pair of conformal maps $z\mapsto\phi_a^{\pm}(z)$ for all $a\in\Delta$, unique additive to a constant, which satisfy
 \begin{equation*}
\phi_{a}^{+}(f_{a}(z))=\phi_{a}^{+}(z)+1\;\;\mathrm{and}\;\;\phi_{a}^{-}(f_{a}(z))=\phi_{a}^{-}(z)+1.
\end{equation*}
  After suitable normalization, $\phi_a^{\pm}$ depend holomorphically on $a$ and  $\phi_a^{\pm}\rightrightarrows\phi_{a_0}^{\pm}$ locally uniformly on compact sets. Moreover, for a fixed parameter $a$, the difference $\phi_a^{-}(z)-\phi_a^{+}(z)$ is constant.
\end{thm}
In fact, the theory of parabolic implosion was originally constructed for the family of rational maps. However, since it is a local theory, organizing local conditions, it can apply to other families. In our setting, the key idea is choosing a repelling petal $\Omega^{-}$ as a subset of $\overline{\mathbb{D}(z_0,R)}$ (recall that $\Omega^{+}$ has only $s(a_0)$, by assumption). This guarantees only $s(a)$ of all singular values to be involved in egg beater dynamics so that the theory applies. In other words, regardless of the parameter, having only one singular value in $\overline{\mathbb{D}(z_0,R)}$ satisfies the necessary local conditions of the parabolic impolosion theory to apply.

\section{Reparametrization}\label{mu}
By Theorem~\ref{thmdouadyfatoucoordinate}, the difference $\phi_a^{-}(z)-\phi_a^{+}(z)$ between the Douady-Fatou coordinates is constant for a fixed parameter. Define
\begin{equation}\label{B}
B:=B(a)=\phi_a^{-}-\phi_a^{+}.
\end{equation}
Since after suitable normalizations, $\phi_a^{+}$ and $\phi_a^{-}$ depend holomorphically on the parameter on some open sector, $B$ is also holomorphic on the same sector. In this section we present the following result. 
\begin{prop}\label{parameterB}
There exists an open sector $\Delta\subset\mathbb{D}(a_0, r)$ with the corner point $a_0$ such that $B$ depends on the parameter  $a$ univalently. 
\end{prop}

Let  $z_1(a)$ and $z_2(a)$ be the two fixed points bifurcated from the parabolic fixed point $z_0$ when $f_{a_0}$ is perturbed to $f_a$ nearby. These fixed points might be equal. Possibly taking $r'<r$, we can suppose that $z_1(a)\neq z_2(a)$ for $a\in\mathbb{D}^{*}(a_0,r')$.  Because the family  depends holomorphically on the parameter, the multipliers of these two fixed points also depend holomorphically on the parameter. By an affine change of coordinates in the dynamical plane, we translate one of the fixed points to the origin, and hence locally, we express the map as
\begin{equation}\label{mu(a)}
f_a(z)=\mu(a)z+O(z^2).
\end{equation}

We are interested in the multiplier map $a\mapsto\mu(a)$.
\begin{prop}\label{univalentmultiplier}
There exists an open sector $\Delta$ in the parameter plane with corner point $a_0$, such that the multiplier map $\mu:\Delta\rightarrow\mathbb{C}$ of the fixed point $0$ of $f_a$ in (\ref{mu(a)}) is univalent.
\end{prop}
The proof requires a change of coordinates: by a univalent map $\lambda:a\mapsto\lambda(a)$, we change the parameter and write (\ref{mu(a)}) as
\begin{equation}\label{normalizedformofthemap}
f_{a(\lambda)}(z)=(1+2\lambda^{n})z+z^{2}+...
\end{equation}
so that the multiplier of the fixed point $0$ is $\rho(\lambda):=\mu(a(\lambda))=1+2\lambda^{n}$ (Lemma \ref{newcoordinate}). The rest is only restricting the domain of $\rho$ so that it is a univalent function of $\lambda$, and hence $\mu$ is a univalent function of $a$. 

\setcounter{thm}{2}
\begin{lem}\label{lemsymmetricfixedpoints} Let $\mathbb{D}(z_0,R)$ be a Euclidean disk which contains only two fixed points $\{z_1(a),z_2(a)\}$ of $f_{a}$. There exists an affine change of coordinates in the dynamical plane, which depends holomorphically on the parameter such that the corresponding fixed points are symmetric with respect to  $0$.
\end{lem}
\begin{proof}
Define $\mathbb{S}(z_0,R):=\partial \mathbb{D}(z_0,R)$. As a result of the Generalized Argument Principle, the sum of the fixed points $z_1(a)$ and $z_2(a)$ is the holomorphic function given by
\begin{equation*}
a\mapsto\sigma(a)=\frac{1}{2\pi i}\oint_{\mathbb{S}(z_0,R)}\frac{w(f_a'(w)-1)}{f_a(w)-w}dw.
\end{equation*}
The linear map $M_a(z)=z-\frac{1}{2}\sigma(a)$ conjugates $f_a$ to the holomorphic map, which has two symmetric fixed points with respect to $0$.
\end{proof}

After the affine change of coordinates given in Lemma~\ref{lemsymmetricfixedpoints}, let us denote the fixed points by $z(a)$ and $-z(a)$ corresponding to $z_1(a)$ and $z_2(a)$. It is possible to find a local expression for them. This is given by the following lemma.
\begin{lem}\label{lemfixedpoints}
Consider a map that has two symmetric fixed points $\pm z(a)$. 
Taking a covering of degree at most $2$ in the parameter plane, the fixed points are holomorphic functions of the parameter of the form:
\begin{equation}\label{symmetricfixedpoints}
a\mapsto \pm z(a)=\pm (a-a_0)^n+O\Big((a-a_0)^{n+1}\Big).
\end{equation}
\end{lem}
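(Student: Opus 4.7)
The plan is to recognize that the product of the two fixed points is a single-valued holomorphic function of the parameter, and then to extract $z(a)$ itself as a square root, absorbing any branch obstruction into an at-most-degree-$2$ cover.

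First, I would apply the Weierstrass preparation theorem to $F(a,w):=f_a(w)-w$ at $(a_0,0)$ (using the coordinate furnished by Lemma \ref{lemsymmetricfixedpoints}). Since $f_{a_0}'(0)=1$ and $f_{a_0}''(0)\neq 0$, the function $F(a_0,w)$ vanishes to order exactly $2$ at $w=0$, so there exist a non-vanishing holomorphic unit $U(a,w)$ and holomorphic $\alpha(a),\beta(a)$ with $\alpha(a_0)=\beta(a_0)=0$ such that
\begin{equation*}
f_a(w)-w=\bigl(w^2+\alpha(a)\,w+\beta(a)\bigr)\,U(a,w)
\end{equation*}
near $(a_0,0)$. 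The two roots of this Weierstrass polynomial are the fixed points $\pm z(a)$ provided by Lemma \ref{lemsymmetricfixedpoints}, so Vieta forces $\alpha\equiv 0$ and $\beta(a)=-z(a)^2$. In particular, $z(a)^2$ is intrinsically a holomorphic function of $a$ vanishing at $a_0$.

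Next comes a Puiseux-type analysis of $z(a)^2$. I would write $z(a)^2=-(a-a_0)^m h(a)$ with $h$ holomorphic, $h(a_0)\neq 0$, and $m\geq 1$ an integer, and then split on the parity of $m$. If $m=2n$ is even, a holomorphic branch of $\sqrt{-h(a)}$ is defined near $a_0$, giving $z(a)=(a-a_0)^n g(a)$ with $g(a_0)=:c\neq 0$; the univalent rescaling $\lambda=c^{1/n}(a-a_0)$ (a degree $1$ cover) then normalizes the leading coefficient and produces $\pm z=\pm\lambda^n+O(\lambda^{n+1})$. If $m$ is odd, the square root of $(a-a_0)^m$ is multi-valued, so I would pass to the degree $2$ branched cover $a=a_0+\lambda^2$; substituting gives $z^2=-\lambda^{2m}h(a_0+\lambda^2)$, which now admits a single-valued holomorphic square root $z=\pm\lambda^m\sqrt{-h(a_0+\lambda^2)}$ in $\lambda$, and a further univalent rescaling of $\lambda$ normalizes the leading coefficient to $1$, yielding $\pm z=\pm\lambda^m+O(\lambda^{m+1})$. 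In either case the composite reparametrization is a covering of degree at most $2$ over a neighborhood of $a_0$, and relabeling the new parameter as $a$ recovers the claimed expansion.

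The only place a genuine obstruction could appear is the square-root extraction, and it is non-trivial precisely when $z(a)^2$ vanishes to odd order at $a_0$; this is exactly the case the statement accommodates by allowing a degree $2$ cover. Beyond Weierstrass preparation and careful bookkeeping of the two parity cases I do not anticipate a serious difficulty.
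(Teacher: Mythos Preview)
Your proof is correct but proceeds along a genuinely different route from the paper's. The paper argues by monodromy: it follows the fixed point $z(a)$ analytically along a simple loop around $a_0$ in the punctured disk $\mathbb{D}^*(a_0,r')$ and observes that after one turn either (i) $z(a)$ returns to itself, in which case it defines a single-valued holomorphic function on $\mathbb{D}^*(a_0,r')$ (hence, by boundedness, extends across $a_0$ with a zero of some order $n$), or (ii) $z(a)$ and $-z(a)$ have swapped, in which case passing to the double cover $b\mapsto a_0+b^2$ reduces to case (i). You instead invoke Weierstrass preparation to show directly that $z(a)^2=-\beta(a)$ is holomorphic at $a_0$, and then perform a Puiseux-type case split on the parity of its vanishing order $m$. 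The two dichotomies match up exactly: the monodromy is trivial precisely when $m$ is even. Your approach is somewhat more algebraic and self-contained---it avoids the (easy but tacit) removable-singularity step at $a_0$ and makes the source of the degree-$2$ obstruction completely explicit as an odd-order zero of $z(a)^2$---while the paper's argument is more geometric and perhaps closer in spirit to how one actually discovers the phenomenon. Either way the final normalization by a univalent rescaling is the same.
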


\begin{proof}
Since $f_a$ depends holomorphically on the parameter, possibly reducing $r$ to $r'$, we may follow the fixed point $z(a)$ analytically along any path in $\mathbb{D}^*(a_0,r')$. Consider a simple closed curve $\gamma:[0,1]\rightarrow\mathbb{C}$ around $a_0$ in $\mathbb{D}^*(a_0,r')$ in the parameter plane. For the parameters $\gamma(0)$ and $\gamma(1)$, we observe two different situations in the corresponding dynamical planes. Either
\begin{enumerate}
\item[(i)] the locations of the fixed points $z(a)$ and $-z(a)$ are the same for $\gamma(0)$ and $\gamma(1)$, or
\item[(ii)] they have interchanged their locations.
\end{enumerate}
In case (i), $z(a)$ is a holomorphic function of $a\in\mathbb{D}^*(a_0,r')$ of local degree $n\geq 1$. Then, (\ref{symmetricfixedpoints}) is obtained after scaling. In case (ii), we consider a branched covering map of degree $2$ from a domain $\widehat{\Omega}$ to $\mathbb{D}(a_0,r')$ in the parameter plane (e.g., $b\mapsto a(b)=a_0+b^2$),
\begin{eqnarray*}
 \xi:\widehat{\Omega}&\rightarrow&\mathbb{C},\notag\\
 b&\mapsto&\xi(b)=a,\;\;\xi(b_0)=a_0.
 \end{eqnarray*}
Hence, the map $b\mapsto z(\xi(b))$ is a branched covering of degree $n\geq 1$. In this new paramerization, we are back in (i). Thus, the fixed points have the following form:
\begin{equation*}
b\mapsto\pm(b-b_0)^n+O((b-b_0)^{n+1})
\end{equation*}
after scaling.
\end{proof}

In the following lemma, we find the multiplier of the fixed point $z(a)$ given in (\ref{symmetricfixedpoints}).
\begin{lem}\label{newcoordinate} Possibly changing the coordinate in the parameter plane, the multiplier of the fixed point $z(a)$ can be given by
\begin{equation}\label{multipliernewparameter}
\lambda\mapsto\rho(\lambda):=1+2\lambda^n
\end{equation}
with $\lambda_0=0$. 
\end{lem}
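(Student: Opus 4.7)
The plan is to compute the multiplier $\mu(a)=f_a'(z(a))$ to leading order in $a-a_0$ and then absorb the higher-order corrections by a biholomorphic change of parameter near $a_0$.

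First, I would exploit the factorization of $f_a(z)-z$ using its two (and only two, by Lemma \ref{existenceoftwofixedpoints}) zeros in $\mathbb{D}(0,R)$: for $a$ sufficiently close to $a_0$, one has
\begin{equation*}
f_a(z)-z \;=\; g_a(z)\,\bigl(z^2-z(a)^2\bigr),
\end{equation*}
where $g_a$ is holomorphic and non-vanishing on $\mathbb{D}(0,R)$ and depends holomorphically on $a$. The nondegeneracy assumption yields $g_{a_0}(0)=\tfrac{1}{2}f_{a_0}''(0)\ne 0$. Differentiating and evaluating at $z=z(a)$ gives the clean identity
\begin{equation*}
\mu(a)-1 \;=\; 2\,z(a)\,g_a\!\bigl(z(a)\bigr).
\end{equation*}

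Next, inserting the expansion $z(a)=(a-a_0)^n+O\bigl((a-a_0)^{n+1}\bigr)$ from Lemma \ref{lemfixedpoints} and using that $a\mapsto g_a(z(a))$ is holomorphic with nonzero value $C_0:=g_{a_0}(0)$ at $a_0$, I would write
\begin{equation*}
\mu(a)-1 \;=\; 2C_0\,(a-a_0)^n\,\bigl(1+u(a)\bigr),
\end{equation*}
with $u$ holomorphic near $a_0$ and $u(a_0)=0$. Since $1+u(a)$ stays in a small neighborhood of $1$, a holomorphic branch of $(1+u)^{1/n}$ with value $1$ at $a_0$ is available; fix also an $n$-th root $\alpha$ of $C_0$. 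Setting
\begin{equation*}
\lambda(a) \;:=\; \alpha\,(a-a_0)\,\bigl(1+u(a)\bigr)^{1/n},
\end{equation*}
one has $\lambda'(a_0)=\alpha\ne 0$, so by the inverse function theorem $a\mapsto\lambda(a)$ is a biholomorphism between neighborhoods of $a_0$ and of $0$. Substituting gives $\mu_{\lambda}=1+2\lambda^n$, as claimed.

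The only delicate point is securing that $C_0\ne 0$, which is precisely where the nondegeneracy hypothesis $f_{a_0}''(z_0)\ne 0$ enters: without it the parabolic multiplicity could exceed $2$, invalidating both the two-zero factorization and the $n$-th root extraction in the reparametrization. Everything else is routine holomorphic bookkeeping, and the resulting change of parameter is the second of the two reparametrizations advertised in the introduction.
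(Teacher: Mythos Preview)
Your proof is correct and follows essentially the same route as the paper: factor $f_a(z)-z=(z^2-z(a)^2)h_a(z)$ with $h_a$ nonvanishing, compute $\mu(a)-1=2z(a)h_a(z(a))$, substitute the expansion of $z(a)$ from Lemma~\ref{lemfixedpoints}, and absorb the nonvanishing holomorphic factor into a new parameter via an $n$-th root. Your explicit identification $C_0=g_{a_0}(0)=\tfrac12 f_{a_0}''(0)$ and the remark on where nondegeneracy enters are a welcome clarification the paper leaves implicit; one small slip in your closing comment is that this lemma belongs to the \emph{first} reparametrization (Section~\ref{mu}), not the second.
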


\begin{proof}

Near $0$, $f_a$ can be written as
\begin{equation*}
f_a(z)=z+(z^2-z(a)^2)h_{a}(z)
\end{equation*}
where $h_{a}:\mathbb{D}(z_0,R)\rightarrow\mathbb{C}$ is a nonvanishing holomorphic map. The multiplier $\mu$ of $z(a)$ is equal to
\begin{equation*}
\mu(a):=f_a'(z(a))=1+ 2 z(a)h_{a}(z(a)).
\end{equation*}
Substituting $z(a)=(a-a_0)^n+O((a-a_0)^{n+1})$ (see Lemma~\ref{lemfixedpoints}), we have
\begin{eqnarray*}
\mu(a)&=&1+2[(a-a_0)^n+O((a-a_0)^{n+1})]h_a(z(a))\\
&=&1+2(a-a_0)^n[1+O(a-a_0)]h_a(z(a))
\end{eqnarray*}
Define the holomorphic map $F(a):=[1+O(a-a_0)]h_a(z(a))$. Observe that $F(a_0)\neq 0$ since $h_a(z(a))$ is nonvanishing. Hence, $F$ has a well-defined $n$th root around $a_0$. Therefore, we can write
\begin{eqnarray}\label{multipliermap}
\mu(a)&=&1+2(a-a_0)^n\big((F(a))^{1/n}\big)^n\notag\\
&=&1+2\big((a-a_0) (F(a))^{1/n}\big)^n.
\end{eqnarray}
We define $\lambda(a):=(a-a_0) (F(a))^{1/n}$. Since $F(a_0)\neq 0$, then $\lambda'(a_0)\neq 0$, and hence there exists a neighbourhood of $a_0$ where the map $a\mapsto\lambda(a)$ is univalent. Thus, we can consider $\lambda$ as a new parameter. Rewriting (\ref{multipliermap}) in the new parameter, the multiplier $\mu(a)$ of $z(a)$ is written as $\rho(\lambda):=1+2\lambda^n$, a holomorphic function of $\lambda$ with $\lambda(a_0)=0$. 
\end{proof}

\begin{proof}[Proof of Proposition \ref{univalentmultiplier}]
With the result of Lemma \ref{newcoordinate}, shifting $z(a)$ to $0$, consider $f_{a(\lambda)}$ expressed as
 in (\ref{normalizedformofthemap}). 
In the $\lambda$-parameter plane, consider the circular sector
\begin{equation*}\label{lambdasector}
\Lambda=\{\lambda:\;\;2\pi\theta_1<\arg(\lambda)<2\pi\theta_2,\;\;\theta_2, \theta_2\in(0,1)\}.
\end{equation*}
Then, $\arg(\rho(\lambda)-1)\in(2\pi\theta_1, 2\pi\theta_2)$. Provided that $n(\theta_2-\theta_1)<1$, the restriction of $\rho$ on $\Lambda$ is a univalent function of $\lambda$. This means, the restriction of $\mu$ on $\Delta=\lambda^{-1}(\Lambda)$ is also a univalent function of $a$. 

\end{proof}
\begin{figure}[htb!]
\begin{center}
\def\svgwidth{10cm}
\includegraphics[scale=.7]{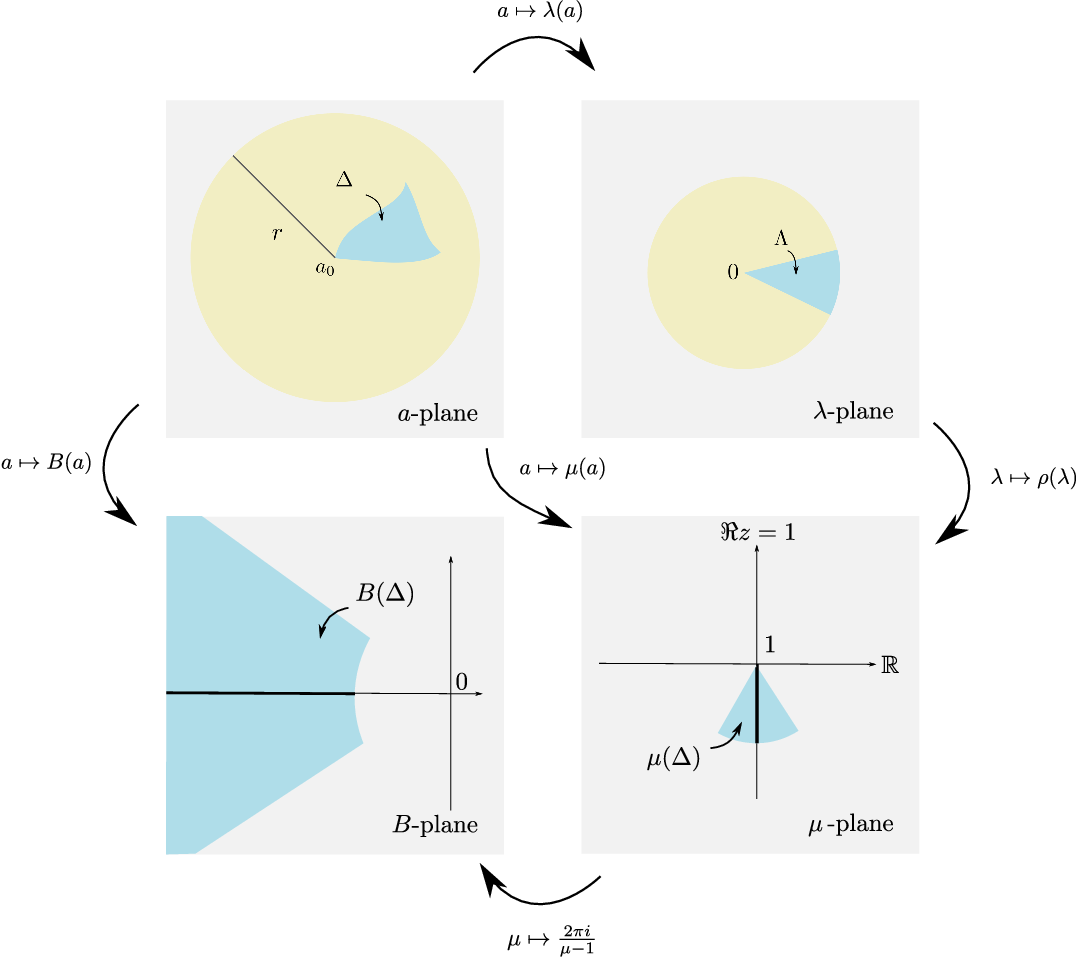}
\caption{\small{Correspondence of the regions in the parameter planes and the multiplier. Observe that $\mu(\Delta)=\rho(\Lambda))$ in $\mu$-plane.}}
\label{abplane}
\end{center}
\end{figure}
The following is a result of Douady-Lavaurs-Shishikura, expressed in our setting.

\begin{thm}\label{propositionmultiplierb}
Under the hypothesis of Theorem~\ref{thmdouadyfatoucoordinate}, if $\Delta$ is chosen where the multiplier $a\mapsto\mu(a)$ of the fixed point $0$ of (\ref{mu(a)}) is univalent, then
\begin{equation}\label{Bmu}
B=B(a)=\phi_a^{-}-\phi_a^{+}=\frac{2\pi i}{\mu(a)-1}
\end{equation}
holds after suitable normalizations of $\phi_a^{\pm}$.
\end{thm}

For the proof, reader can see \cite[Theorem 2.1]{lei2000}, or \cite{ADeniz2014} alternatively.

\begin{proof}[Proof of Proposition \ref{parameterB}] There exists an open sector $\Delta$ in $a$-plane where the multiplier map $\mu$ is univalent by Proposition~\ref{univalentmultiplier}. By (\ref{Bmu})  in Theorem~\ref{propositionmultiplierb}, $B$ is also univalent with respect to $a$ in $\Delta$. 
\end{proof}

\begin{rem}
In order to match our work with the classical description of the parabolic implosion, we choose a circular sector $\Lambda$ in $\lambda$-plane so that for the circular sector $\rho(\Lambda)$, the Douady-Fatou coordinates exists. Then $\Delta=\lambda^{-1}(\Lambda)$ is a ``deformed'' sector. Note that by $B$, the line segment $\mu(\Delta)\cap\{z:\;\;\Re(z)=1,\;\;\Im(z)<0\}$ maps to a half line on the negative real axis in $B$-plane. Observe that $\mu(\Delta)$ are $\rho(\Lambda)$ same (see Figure~\ref{abplane}).
\end{rem}

\section{Proof of the Main Theorem}\label{sectionproof}

Consider the forward invariant curve $\gamma_{a_0}$ which lands at the parabolic fixed point $z_0$ of $f_{a_0}$. Here $\gamma_{a_0}$ lands through the repelling petal $\Omega^{-}$ as any orbit under $f_{a_0}^{-1}$ converging to $z_0$ asymptotically converges to the repulsion vector (see \cite[Lemma 10.1]{Mil}). This means $\gamma_{a_0}$ must be outside the immediate parabolic basin. Note that, depending on $f_{a_0}$, $\gamma_{a_0}$ can be contained in a Fatou component having $z_0$ on its boundary, or in the Julia set.
The Fatou coordinate $\phi_{a_0}^{-}$ maps $\gamma_{a_0}\cap\Omega^{-}$ to a $1$-periodic curve in the Fatou coordinate plane. We extend this curve $1$-periodically towards $-\infty$ and $+\infty$, and denote the extension by $\widetilde{\gamma}_{a_0}$. 
\begin{defn}[Central sector]
If $\Delta$ is chosen so that $\mu(\Delta)\cap\{z:\;\Re(z)=1,\;\Im(z)<0\}\neq\emptyset$, we call it \textit{a central sector}. 
\end{defn}

Consider a central sector $\Delta$ with corner point $a_0$, on which the Douady-Fatou coordinates are well defined, and moreover, $a\mapsto B(a)=\phi_a^{-}-\phi_a^{+}$ is univalent (see Section \ref{mu}). Recall that  $\phi_a^{-}\rightrightarrows\phi_{a_0}^{-}$ on compact subsets of the repelling petal $\Omega^{-}$. Observe that because $\Delta$ is central, the region $B(\Delta)$ is unbounded from left, by (\ref{Bmu}).

Denote by $\gamma_{a_{0}}[t,t+1]$, the fundamental segment with endpoints $\gamma_{a_0}(t)$ and $\gamma_{a_0}(t+1)$ on the forward invariant curve $\gamma_{a_0}$.

Let $H:\mathbb{D}(a_0,\delta)\times\gamma_{a_0}[T,\mathcal{T})\rightarrow\widehat{\mathbb{C}}$ be the equivariant holomorphic motion, and $\gamma_a(t):=H(a,\gamma_{a_{0}}(t))$. By the following three steps, we may need to restrict the parameter set $\mathbb{D}(a_0,\delta)$. 

\begin{itemize}
    \item[1.] Given $T'<T$, we may assume that $H$ extends equivariantly to $\mathbb{D}(a_0,\delta_1)\times\gamma_{a_0}[T',\infty)$, for some $\delta_1\leq \delta$, by using the dynamics.
\end{itemize} Suppose $t'\in\mathbb{R}$, such that $\gamma_{a_0}[t',t'+2]\subset\Omega^{-}$,  $H:\mathbb{D}(a_0,\delta_1)\times\gamma_{a_0}[t',\mathcal{T})\rightarrow\mathbb{C}$ is an equivariant extension. 

\begin{itemize}
    \item[2.] Possibly reducing $\delta_1$ to $\delta_2$, we can assume that
\begin{equation*}
K:=H(\overline{\mathbb{D}(a_0,\delta_2)},\gamma_{a_0}[t',t'+2])\subset\Omega^{-},
\end{equation*}
by the continuity of the holomorphic motion.
Here $K\subset\Omega^{-}$ consists of curves each of which is in overlapping dynamical planes.

\item[3.] By a further reduction of $\delta_2$ to $0<\delta_3\leq \delta_2$, we can assume that $K$ is contained in the domain of the outgoing Douady-Fatou coordinate $\phi_a^{-}$ for $a\in\Delta_3:=\Delta\cap\overline{\mathbb{D}(a_0,\delta_3)}$, where $\Delta$ is the initial central sector.
\end{itemize}

 For $a\in\Delta_3$, $\phi_a^{-}$ sends $\gamma_a[t',t'+2]$ to $\phi_a^{-}(\gamma_a[t',t'+2])$, which extends $1$-periodically towards $-\infty$ and $+\infty$, and denote the extension by $\widetilde{\gamma}_a$ (see Figure~\ref{overlappedrays}). 

\begin{figure}[htb!]
\begin{center}
\includegraphics[scale=.2]{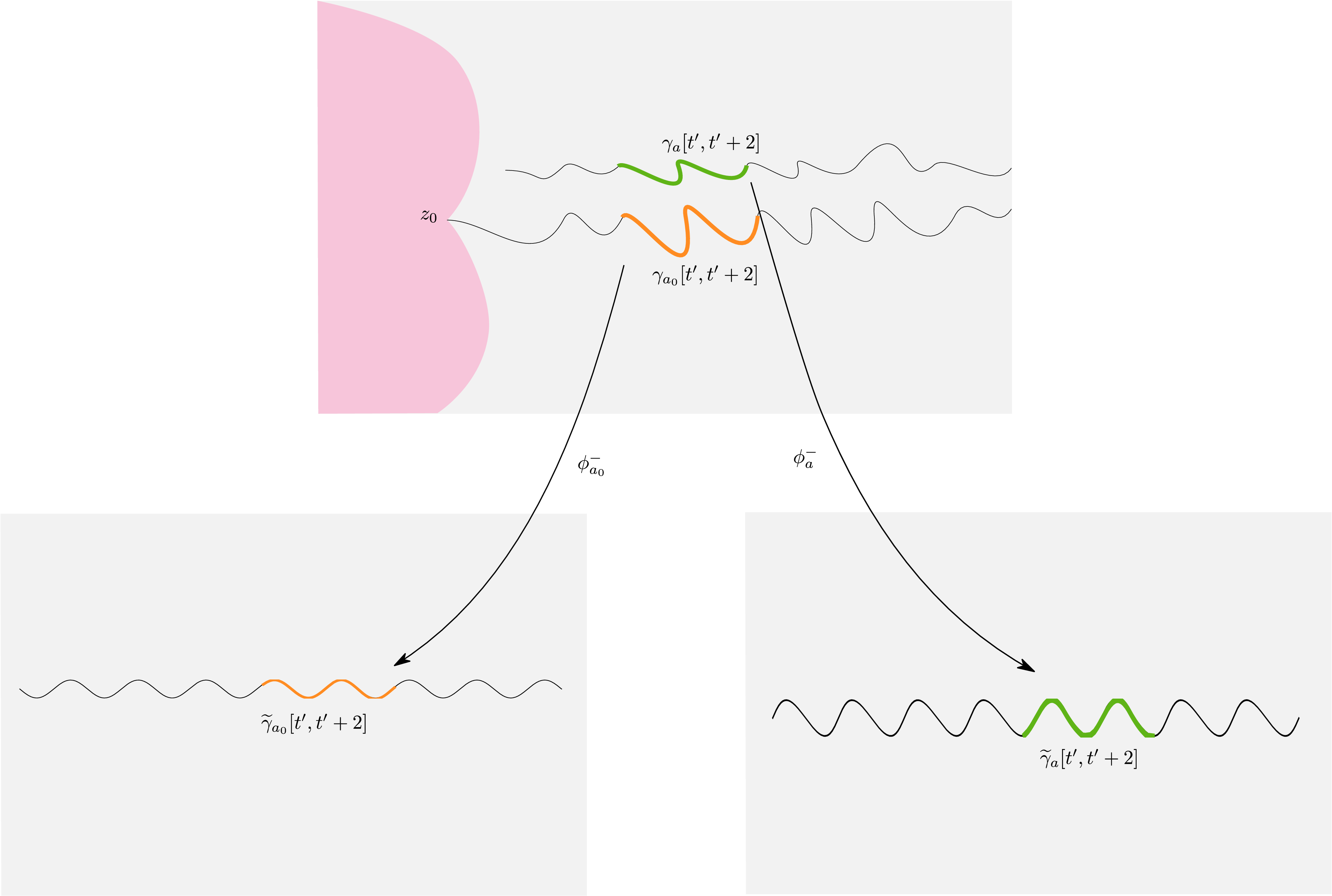}
\end{center}
\caption{\small{Projection of curve pieces in $K$ in the (Douady-) Fatou Coordinate planes and their $1$-periodic extensions.} }\label{overlappedrays}
\end{figure}

Denote by $\widetilde{\gamma}_{a}[t,t+1]=\phi_a^{-}({\gamma}_a[t,t+1])$, the fundamental segment with endpoints $\widetilde{\gamma}_a(t)$ and $\widetilde{\gamma}_a(t+1)$ on the curve $\widetilde{\gamma}_a$, and by $[\widetilde{\gamma}_a(t),\widetilde{\gamma}_a(t+1)]$ the line segment with endpoints $\widetilde{\gamma}_a(t)$ and $\widetilde{\gamma}_a(t+1)$.
\begin{lem}\label{lemisotopy}
There exists an isotopy of curves
\begin{eqnarray*}
\mathcal{I}:\overline{\Delta_3}\times[t',t'+1]\times[0,1]\times[0,1]&\rightarrow&\mathbb{C}\\
(a,t'',s,t)&\mapsto&\mathcal{I}(a,t'',s,t)
\end{eqnarray*}
such that
\begin{eqnarray*}
\mathcal{I}(a,t'' ,0,[0,1])&=&\widetilde{\gamma}_{a}[t'',t''+1],\\
\mathcal{I}(a,t'',1,[0,1])&=&[\widetilde{\gamma}_{a}(t''),\widetilde{\gamma}_{a}(t''+1)],
\end{eqnarray*}
relative to the endpoints $\widetilde{\gamma}_{a}(t'')$ and $\widetilde{\gamma}_{a}(t''+1)$. Moreover, 
\begin{enumerate}
    \item [i.] $t''\mapsto\mathcal{I}(a,t'',s,t)$ is continuous,
    \item [ii.] $a\mapsto\mathcal{I}(a,t'',s,t)$ is continuous,
    \item [iii.] $t\mapsto\mathcal{I}(a,t'',s,t)$ is $T_1$ invariant.
\end{enumerate}

\end{lem}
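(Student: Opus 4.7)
My approach is a straight-line homotopy in the Fatou coordinate plane. The relation $\phi_a^{-}\circ f_a = T_1\circ \phi_a^{-}$ together with $f_a(\gamma_a(t)) = \gamma_a(t+1)$ yields $\widetilde{\gamma}_a(t+1) = \widetilde{\gamma}_a(t)+1$, so I write
\[
\widetilde{\gamma}_a(t) = t + g_a(t),
\]
with $g_a$ continuous and $1$-periodic in $t$. Joint continuity of $g_a$ on $\overline{\Delta_3}\times\mathbb{R}$ follows from continuity of the holomorphic motion $H$ together with the holomorphic dependence of $\phi_a^{-}$ on $a$ on compact subsets of $\Omega^{-}$; in particular, $\widetilde{\gamma}_a(t''+t)$ depends continuously on $(a,t'',t)$.

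I then set
\[
\mathcal{I}(a,t'',s,t) := (1-s)\,\widetilde{\gamma}_a(t''+t) + s\bigl[(1-t)\widetilde{\gamma}_a(t'') + t\,\widetilde{\gamma}_a(t''+1)\bigr],
\]
which, using $\widetilde{\gamma}_a(t''+1) = \widetilde{\gamma}_a(t'') + 1$, simplifies to
\[
\mathcal{I}(a,t'',s,t) = (t''+t) + g_a(t'') + (1-s)\bigl[g_a(t''+t) - g_a(t'')\bigr].
\]
At $s=0$ this is the given parametrization of $\widetilde{\gamma}_a[t'',t''+1]$; at $s=1$ it is the affine parametrization of the chord; the endpoints $\widetilde{\gamma}_a(t'')$ and $\widetilde{\gamma}_a(t''+1)$ are fixed for every $s$ by $1$-periodicity of $g_a$; and continuity in $(a,t'',s,t)$ on $\overline{\Delta_3}\times[t',t'+1]\times[0,1]\times[0,1]$ is inherited from $g_a$.

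The main obstacle is to check that for each fixed $(a,t'',s)$ the slice $t\mapsto\mathcal{I}(a,t'',s,t)$ is injective, so that $\mathcal{I}$ is an isotopy of embedded arcs rather than only a homotopy. Injectivity at $s=1$ is trivial, and at $s=0$ it follows from univalence of $\phi_a^{-}$ on $\Omega^{-}$ combined with simplicity of $\gamma_a$; intermediate $s$ is the delicate case, because a straight-line interpolation between a simple arc and its chord can in principle develop self-intersections. I would handle this by shrinking $\delta_3$ so that, by continuity and compactness on $\overline{\Delta_3}\times[t',t'+1]$, the arc and the chord meet only at their endpoints and together bound a Jordan domain $D_{a,t''}$ (possible because $\widetilde{\gamma}_{a_0}$ is a fixed simple curve on which the analogous statement holds). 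The straight-line interpolation then stays in $\overline{D_{a,t''}}$ and each intermediate slice is easily verified to be embedded. If this geometric step proves insufficient, a fallback is to invoke Jordan--Schoenflies together with Carath\'eodory kernel convergence to obtain a continuous family of homeomorphisms $\overline{\mathbb{D}}\to\overline{D_{a,t''}}$ and pull back the canonical radial isotopy in the disk; continuous dependence on $(a,t'')$ of the resulting $\mathcal{I}$ is then automatic.
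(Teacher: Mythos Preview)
Your straight-line homotopy is a genuinely different construction from the paper's. The paper does \emph{not} interpolate linearly in the Fatou-coordinate plane; instead it takes a high horizontal line $\mathcal{L}=\mathbb{R}+iy$ disjoint from all $\widetilde{\gamma}_a$, lets $\Omega$ be the strip-like region bounded below by $\widetilde{\gamma}_a$ and above by $\mathcal{L}$, uniformizes $\Omega$ by a conformal map $\Phi_a:S\to\Omega$ from the standard strip $S=\{0<\Im z<1\}$ (extended to the boundary by Carath\'eodory), and then sets
\[
\mathcal{I}(a,t'',s,t)=\Phi_a\bigl(is+\Phi_a^{-1}(\widetilde{\gamma}_a(t''+t))\bigr)+\widetilde{\gamma}_a(t'')-\Phi_a\bigl(is+\Phi_a^{-1}(\widetilde{\gamma}_a(t''))\bigr).
\]
Because $\Omega$ is $T_1$-invariant one may take $\Phi_a\circ T_1=T_1\circ\Phi_a$; then at $s=1$ the formula lands on a horizontal segment, hence on the chord.

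The trade-off is exactly the one you identified. Your formula is elementary and makes continuity in $(a,t'',s,t)$ transparent, but leaves open whether each $t$-slice is embedded; your claim that once the arc and chord bound a Jordan domain ``each intermediate slice is easily verified to be embedded'' is not justified---a straight-line homotopy from a simple arc to its chord can in principle acquire self-intersections (nothing prevents $g_a(\tau_1)-g_a(\tau_2)=(\tau_2-\tau_1)/(1-s)$ for some $s\in(0,1)$ even when it fails for $s=0$). The paper's uniformization, by contrast, gives embeddedness for free: for fixed $s$ the slice is $\Phi_a$ applied to a horizontal segment $\{is+x:x\in[\,\cdot\,]\}$, translated, and $\Phi_a$ is univalent. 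The price is that continuous dependence of $\Phi_a$ on $a$ must be invoked. Your fallback via Jordan--Schoenflies/Carath\'eodory is essentially the same idea as the paper's and would close the gap; it is the route you should commit to if you want an isotopy rather than merely a homotopy.
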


\begin{proof}
Take $y$ large enough so that the horizontal line $\mathcal{L}:=\mathbb{R}+iy$ is above $\widetilde{\gamma}_a$ and so $\mathcal{L}\cap\widetilde{\gamma}_{a}=\emptyset$. 
Denote by $\chi_a$ the domain bounded by $\mathcal{L}$ from above and by $\widetilde{\gamma}_a$ from below. Since $\chi_a$ is simply connected, by the Uniformization Theorem, there exists a conformal map $\pi_a:S\rightarrow \chi_a$ from the horizontal strip $S:=\{z:\;\;0<\Im z<1\}$ to $\chi_a$. Moreover, since $\partial\chi_a$ is a Jordan curve, $\pi_a$ extends continuously to $\partial S$, say $\pi(\mathbb{R})=\widetilde{\gamma}_a$ and $\pi(\mathbb{R}+i)=\mathcal{L}$. The composition $\pi_a^{-1}\circ T_1\circ\pi_a$ defines an isomorphism on $S$, which is necessarily a translation $T_c(z):=z+c$ for some $c\in\mathbb{R}^{+}$. 

\begin{figure}[htb!]
\begin{center}
\includegraphics[scale=.5]{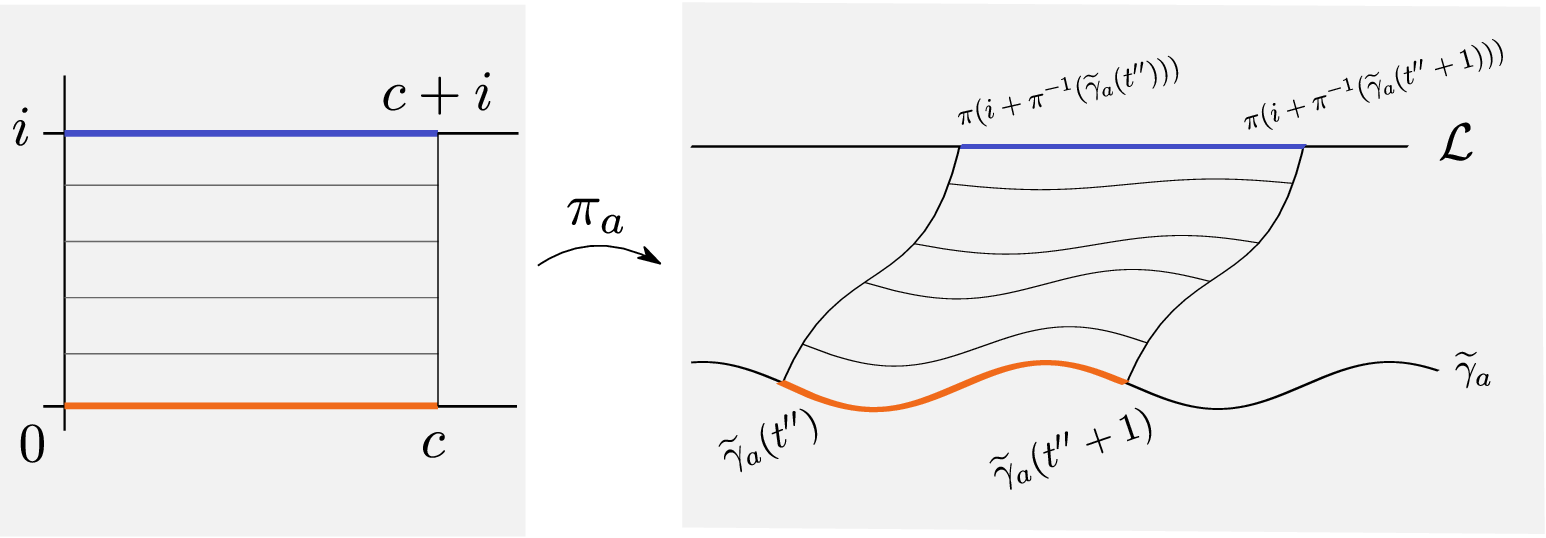}
\end{center}
\caption{\small{Conformal map $\pi_a:S\rightarrow \chi_a$, normalized so that $\pi_a(0)=\widetilde{\gamma}_a(t'')$ for a simpler demonstration.}}\label{sertac1}
\end{figure}

We define the isotopy of curves as follows (see Figure \ref{sertac1}):
\begin{equation*}
\mathcal{I}(a,t'',s,t):=\pi_a\Big(is+\pi_a^{-1}(\widetilde{\gamma}_a(t''+t))\Big)+\widetilde{\gamma}_a(t'')-\pi_a\Big(is+\pi_a^{-1}(\widetilde{\gamma}_a(t''))\Big).
\end{equation*}

Observe that $\mathcal{I}(a,t'',0,t)=\widetilde{\gamma}_a(t''+t)$ and

\begin{equation*}
    \mathcal{I}(a,t'',0,[0,1])=\widetilde{\gamma}_a[t'', t''+1].
\end{equation*}

Observe also
\begin{equation*}
\pi_a(i+\pi_a^{-1}\left(\widetilde{\gamma}_a(t''+t)))-\pi_a(i+\pi_a^{-1}(\widetilde{\gamma}_a(t''))\right)=t,
\end{equation*}
and hence we have 
\begin{equation*}
    \mathcal{I}(a,t'',1,[0,t])=\widetilde{\gamma}_a(t'')+[0,t]=[\widetilde{\gamma}_a(t''), \widetilde{\gamma}_a(t''+t)],
\end{equation*}
and in particular 
\begin{equation*}
    \mathcal{I}(a,t'',1,[0,1])=\widetilde{\gamma}_a(t'')+[0,1]=[\widetilde{\gamma}_a(t''), \widetilde{\gamma}_a(t''+1)].
\end{equation*}

\begin{itemize}
    \item[i.] Since $\pi_a$ is conformal and $\widetilde{\gamma}_a$ is a continuous curve, $\mathcal{I}$ is continuous with respect to $t''$.
    \item[ii.] Since $\gamma_a[t',t'+2]\subset K$,  $a\mapsto\gamma_a[t',t'+2]$ is continuous by the holomorphic motion. Because $\gamma_a[t',t'+2]$ is also contained in the domain of the outgoing Douady-Fatou coordinate $\phi_a^{-}$, and $\phi_a^{-}$ depends continuously on the parameter, $a\mapsto\widetilde{\gamma}_a[t', t'+2]=\phi_a^{-}(\gamma_a[t', t'+2])$ is also continuous. And since $a\mapsto \pi_a$ is continuous, we conclude that $\mathcal{I}$ is continuous with respect to $a\in\Delta_3$.
 
 \item[iii.] Using $\pi_a\circ T_c=T_1\circ\pi_a$, observe
\begin{eqnarray*}
\pi_a\left(is+\pi_a^{-1}(\widetilde{\gamma}_a(t''+t+1))\right)&=&\pi_a\left(is+\pi_a^{-1}(\widetilde{\gamma}_a(t''+t)+1)\right)\\
&=&\pi_a\left(is+\pi_a^{-1}(\widetilde{\gamma}_a(t''+t))+c\right)\\
&=&\pi_a\left(is+\pi_a^{-1}(\widetilde{\gamma}_a(t''+t)\right)+1,
\end{eqnarray*}

and similarly
\begin{equation*}
\pi_a\left(is+\pi_a^{-1}(\widetilde{\gamma}_a(t''))\right)=\pi_a\left(is+\pi_a^{-1}(\widetilde{\gamma}_a(t'')\right)+1,
\end{equation*}

and since $\widetilde{\gamma}_a(t''+1)=\widetilde{\gamma}_a(t'')+1$,
we obtain

\begin{equation*}
\mathcal{I}(a,t'',s,t)-\mathcal{I}(a,t'',s,t-1)=1,
\end{equation*}

that is, $\mathcal{I}$ is $T_1$-equivariant in the variable $t$.
\end{itemize}
\end{proof}
\begin{proof}[Proof of the Main Theorem -- Conclusion.]
Since $\overline{\Delta_3}$, $[t',t'+1]$, $[0,1]$ are compact, and $\mathcal{I}$ is continuous in $(a,t'',s,t)$, then  $\widetilde{C}=\mathcal{I}(\overline{\Delta_3}\times[t',t'+1]\times[0,1]\times[0,1])$ is also compact. We may need extra restriction in order for $\widetilde{C}$ to be included in $\phi_{a_0}(\Omega^{-})$ and in the domain of $\psi_a^{-}=(\phi_a^{-})^{-1}$ as follows:

\begin{itemize}
    \item[1.] If $\widetilde{C}$ is not contained in $\phi_{a_0}^{-}(\Omega^{-})$, possibly reducing $\delta_3$ to $\delta_4$, and using the $T_1$-equivariance of $\mathcal{I}$,

we can assume $\widetilde{C}-n\subset\phi_{a_0}^{-}(\Omega^{-})$ for some $n\in\mathbb{N}$. 
\item[2.] Finally, if $\widetilde{C}-n$ is not in the domain of $\psi_a^{-}=(\phi_a^{-})^{-1}$ for $a\in\Delta_4:=\Delta\cap\mathbb{D}(a_0,\delta_4)$, with a further reduction $\delta_5$ from $\delta_4$, we can assume that $\widetilde{C}-n$ is in the domain of $\psi_a^{-}$ for  $a\in\Delta_5=\Delta\cap\mathbb{D}(a_0,\delta_5)$. 
\end{itemize}

 Let $\widetilde{C}_{a,t''}$ denote the restriction $\mathcal{I}|_{\{a\}\times\{t''\}\times[0,1]\times[0,1]}$ for $t''\in[t'-n,t'-n+1]$ and $a\in\Delta_5$. Consider the equivariant extension $H:\mathbb{D}(a_0,\delta_5)\times\gamma_{a_0}[t'-n,\mathcal{T})$.

For $a\in\overline{\Delta_5}$, let $\widetilde{\gamma}_{a_0}$ and $\widetilde{\gamma}_a$ denote the extensions of $\phi_{a_0}^{-}(\gamma_{a_0}[t',t'+2])$ and $\phi_{a}^{-}(\gamma_{a}[t',t'+2])$, respectively. We use the new parameter $B$, as $a=a(B)$ (Proposition~\ref{parameterB}). To fix normalization, in addition, we shall suppose that $B:=\phi_a^{-}(s(a))$ (then by this choice of normalization, $\phi_a^{+}(s(a))=0$, by (\ref{B})).

By the continuity of $\widetilde{\gamma}_a(t'')$ on the parameter at $a_0$, we can define
\begin{equation}\label{supepsilon}
\epsilon:=\sup_{t''\in[t'-n,t'-n+1],a\in\overline{\Delta_5}}|\widetilde{\gamma}_a(t'')-\widetilde{\gamma}_{a_0}(t'')|.
\end{equation}

Because $\Delta_5$ is a central sector, the region $B(\Delta_5)$ is unbounded from the left, and contains a half line on the negative real axis. Since $\widetilde{\gamma}_{a_0}$ is $1$-periodic, there always exists $t_0$ such that for all $t\leq t_0$,  $\widetilde{\gamma}_{a_0}(t)$ stays inside $B(\Delta_5)$ and away from the boundary of $B(\Delta_5)$. Choose $t_0$ with the property that, for all $t\leq t_0$:

\begin{equation*}
\widetilde{\gamma}_{a_0}(t)\in B(\Delta_5)\;\;\;\;\;\mathrm{and}\;\;\;|\widetilde{\gamma}_{a_0}(t)-\partial B(\Delta_5)|>\epsilon.
\end{equation*}

We want to compare $\widetilde{\gamma}_a(t)$ and $B$. Consider the following identity:
\begin{equation}\label{starequality}
\widetilde{\gamma}_a(t)-B=\Big(\widetilde{\gamma}_a(t)-\widetilde{\gamma}_{a_0}(t)\Big)+\Big(\widetilde{\gamma}_{a_0}(t)-B\Big).
\end{equation}
First, we will show that whenever $a\in\Delta_5$, then 
\begin{equation}\label{continuityinequality5}
|\widetilde{\gamma}_{a}(t)-\widetilde{\gamma}_{a_0}(t)|<\epsilon,\;\;\;\;\;\text{for all}\;\;\; t\leq t_0.
\end{equation}

Let $k$ be the natural number such that $t+k:=t''\in[t'-n,t'-n+1]$. By $1$-periodicity,
\begin{equation*}
\widetilde{\gamma}_a(t)+k=\widetilde{\gamma}_{a}(t+k)=\widetilde{\gamma}_a(t''),
\end{equation*}
and so
\begin{equation*}
\widetilde{\gamma}_{a}(t)-\widetilde{\gamma}_{a_0}(t)=\widetilde{\gamma}_{a}(t'')-\widetilde{\gamma}_{a_0}(t'').
\end{equation*}
Therefore, (\ref{supepsilon}) implies (\ref{continuityinequality5}).

Set $B_0:=\widetilde{\gamma}_{a_0}(t)$, and define holomorphic functions of $B$ in $B(\Delta_5)$,
\begin{eqnarray*}
\xi_1(B)&:=&\widetilde{\gamma}_{a(B)}(t)-B\;\;\;\;\mathrm{and}\notag\\
\xi_2(B)&:=&B_0-B.
\end{eqnarray*}
The map $\xi_1$ is a holomorphic function of $B$ because the Douady-Fatou coordinates and the singular value $s(a)$ depend holomorphically on the parameter $a$ in $\Delta_5$ and hence on the parameter $B$ in $B(\Delta_5)$. Obviously, $\xi_2$ is a holomorphic map of $B$.
In terms of $\xi_1$ and $\xi_2$, (\ref{starequality}) can be written as
\begin{equation*}
\xi_1(B)=\Big(\widetilde{\gamma}_{a}(t)-\widetilde{\gamma}_{a_0}(t)\Big)+\xi_2(B).
\end{equation*}
This yields, by (\ref{continuityinequality5}),
\begin{equation*}
|\xi_1(B)-\xi_2(B)|<\epsilon\;\;\;\;\mathrm{for}\;\;B\in B(\Delta_5).
\end{equation*}
For $B\in \mathbb{S}(B_0,\epsilon):=\partial \mathbb{D}(B_0,\epsilon)$, $|\xi_2(B)|=\epsilon$. Since $\xi_2$ has one simple zero at $B_0$ in $\mathbb{D}(B_0,\epsilon)$, then $\xi_1$ has one simple zero in $\mathbb{D}(B_0,\epsilon)$ by Rouch\'e's Theorem (see Figure~\ref{sunuhade1}). This means that in $\mathbb{D}(B_0,\epsilon)$, there exists a unique parameter $B'$, which satisfies
\begin{equation*}
B'=\widetilde{\gamma}_{a(B')}(t),
 \end{equation*}
 equivalently
\begin{equation*}
 \phi_{a(B')}^{-}\Big(s(a(B'))\Big)=\widetilde{\gamma}_{a(B')}(t).
 \end{equation*}
Set $a':=a(B')$, then $\phi_{a'}^{-}\Big(s(a')\Big)=\widetilde{\gamma}_{a'}(t).$ We claim that $\gamma_{a'}$ has a unique $f_{a'}$-invariant extension until the singular value $s(a')$ and that
\begin{equation*}
s(a')=\gamma_{a'}(t).
\end{equation*}

\begin{figure}[htb!]
\begin{center}
\includegraphics[scale=.49]{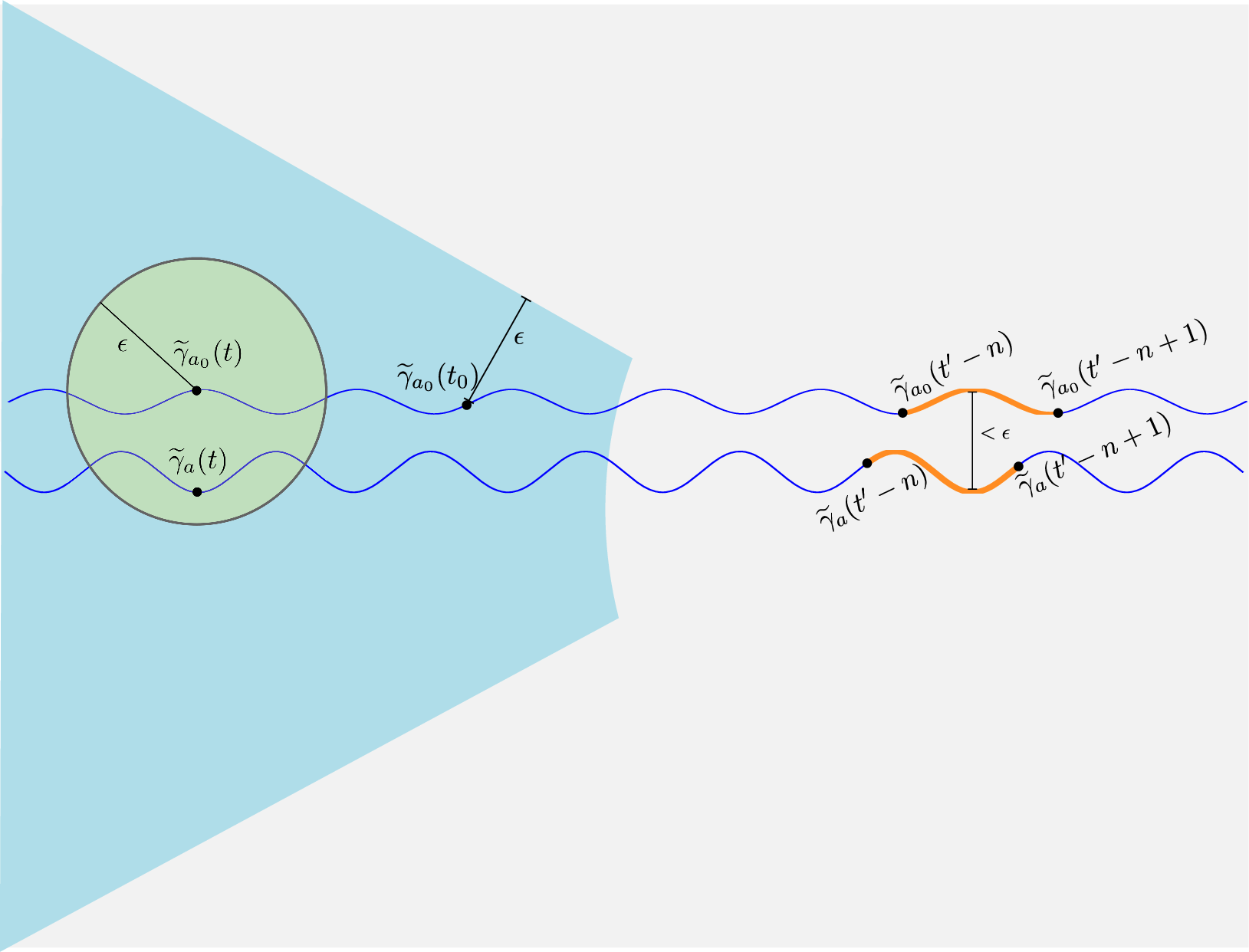}
\end{center}
\caption{\small{Demonstration of the proof of the Main Theorem. Blue region is $B(\Delta)$.}}\label{sunuhade1}
\end{figure}

\noindent We use the fact that the fundamental segment $\widetilde{\gamma}_{a'}[t'',t''+1]$ and the line segment $[\widetilde{\gamma}_{a'}(t''),\widetilde{\gamma}_{a'}(t''+1)]$ are isotopic relative to endpoints  $\widetilde{\gamma}_{a'}(t'')$ and $\widetilde{\gamma}_{a'}(t''+1)$. (See Lemma~\ref{lemisotopy}).
Moreover, the line segment $\widetilde{l}=[\phi_{a'}^{-}(s(a')),\widetilde{\gamma}_{a'}(t''+1)]$ is contained in the domain of the Douady-Fatou parameter $\psi_{a'}^{-}=(\phi_{a'}^{-})^{-1}$. Therefore, $l:=\psi_{a'}^{-}(\widetilde{l})$ is well defined and is a simple $f_{a'}$-invariant curve that connects $s(a')$ and $\gamma_{a'}(t'')$. On the other hand, the isotopy of curves $\widetilde{C}_{a',t''}$ maps to an isotopy of curves by  $\psi_{a'}^{-}$ in the dynamical plane for $f_{a'}$. By pulling back $\psi_{a'}^{-}([\widetilde{\gamma}_{a'}(t''),\widetilde{\gamma}_{a'}(t''+1)])$ $k$ times, we reach $s(a')$. More precisely, since $t+k=t''$, for the inverse branch $f^{-1}_{a'}$ along $l$, we have $f^{-k}_{a'}(\psi_{a'}(\widetilde{\gamma}_{a'}(t'')))=s(a')$.

Note that $\psi_{a'}^{-}([\widetilde{\gamma}_{a'}(t''),\widetilde{\gamma}_{a'}(t''+1)])$ is in the same isotopy class as $\psi_{a'}^{-}(\widetilde{\gamma}_{a'}[t'',t''+1])=\gamma_{a'}[t'',t''+1]$ relative to the set $\bigcup_{i=0}^{n+1}f_{a'}(s(a'))$. Therefore, by pulling back $\gamma_{a'}[t'',t''+1]$ under $f_{a'}$ $k$ times, we also reach $s(a')$. This gives us a well-defined extension of $\gamma_{a'}$ until  $s(a')$. In other words, the singular value $s(a')$ is on the forward invariant curve $\gamma_{a'}$ at potential $t$. 

This relation induces a map $\Gamma$, which assigns a unique parameter to each potential. More precisely, $\Gamma:(-\infty,t_0]\rightarrow\overline{\Delta_5}$, so that writing $a'=\Gamma(t)$ then $s(a')=\gamma_{a'}(t)$.

Now, we will show that $\Gamma$ forms a curve. By the Generalized Argument Principle, we have

\begin{equation*}
\xi^{-1}(w)=\frac{1}{2\pi i}\oint_{\mathbb{S}(B_0,\epsilon)}B\frac{\xi_1'(B)}{\xi_1(B)-w}dB
\end{equation*}
Knowing $\xi(B)=\widetilde{\gamma}_{a(B)}(t)-B$, and so $\xi^{-1}(\widetilde{\gamma}_{a(B)}(t)-B)=B$, when $B$ depends on $t$ in a way that $\widetilde{\gamma}_{a(B)}(t)=B$, then the integral in the following relation
\begin{equation*}
B(t)=\xi^{-1}(0)=\frac{1}{2\pi i}\oint_{\mathbb{S}(B_0,\epsilon)}B\frac{\xi_1'(B)}{\xi_1(B)}dB
\end{equation*}
is continuous, which means $t\mapsto B(t)$ is continuous.
Therefore, since $a(B(t))=\Gamma(t)$, then $\Gamma$ is a continuous curve, as it is parametrized by the continuous curve $(-\infty,t_0]$. 

Finally, we show that $\Gamma(t)$ lands at $a_0$. As $t\rightarrow -\infty$, $\gamma_{a_0}(t)\rightarrow z_0$, and  $\widetilde{\gamma}_a(t)\rightarrow-\infty$. This implies $\widetilde{\gamma}_a(t)=B\rightarrow-\infty$, and equivalently, $a\rightarrow a_0$. Hence, we obtain  as $t\rightarrow -\infty$,  $\Gamma(t)\rightarrow a_0$.
\end{proof}

\textbf{Acknowledgement:} This work is a part of my Ph.D. thesis. I would like to thank my
supervisor Carsten Lunde Petersen for introducing me to the problem, his guidance and inspiring discussions. Many thanks to my supervisor N\'{u}ria Fagella for her support and many valuable suggestions. Also, I would like to thank Anna Miriam Benini and Adam Epstein for pointing out the missing details. Special thanks to the referee for helping the work show its potential with many useful comments and corrections in both structure and mathematics. 
This work was supported by Marie Curie RTN 035651-CODY and Roskilde University, IMFUFA.

\newpage

\end{document}